\newcommand{\R}{\mathbb R}
\newcommand{\Xb}{\mathbf{X}}
\newcommand{\xb}{\ensuremath{\mathbf{x}}}
\newcommand{\Yb}{\ensuremath{\mathbf{Y}}}
\newcommand{\Sb}{\mathbf{S}}
\newcommand{\thetab}{{\pmb \theta}}
\newcommand{\Sigb}{{\pmb \Sigma}}
\newcommand{\n}{^{(n)}}
\newcommand{\pr}{^{\prime}}
\newcommand{\ny}{n\rightarrow\infty}
\newtheorem{lemma}{Lemma}[section]
\newtheorem{proposition}{Proposition}[section]
\newtheorem{theorem}{Theorem}[section]
\begin{document}

\title{High-dimensional tests for spherical location \\ and spiked covariance
}

\author{Christophe {\sc Ley}\footnote{E-mail address: chrisley@ulb.ac.be; URL: http://homepages.ulb.ac.be/\~{}chrisley} \,, Davy {\sc Paindaveine}\footnote{E-mail address: dpaindav@ulb.ac.be; URL: http://homepages.ulb.ac.be/\~{}dpaindav} \,  and Thomas {\sc Verdebout}\footnote{E-mail address: thomas.verdebout@univ-lille3.fr; URL: http://perso.univ-lille3.fr/\~{}tverdebout} \vspace{0.5cm}\\
Universit\'e Libre de Bruxelles and Universit\' e 
Lille Nord de France}
\date{}

\maketitle

%

\begin{abstract}
Rotationally symmetric distributions on the $p$-dimensional unit hypersphere,  extremely popular in directional statistics, involve a location parameter~$\thetab$ that indicates the direction of the symmetry axis. The most classical way of addressing the spherical location problem~$\mathcal{H}_0:\thetab=\thetab_0$, with~$\thetab_0$  a fixed location,  is the so-called Watson test, which is based on the  sample mean of the observations. This test enjoys many desirable properties, but its implementation requires the sample size~$n$ to be large compared to the dimension~$p$. This is a severe limitation, since more and more problems nowadays involve high-dimensional directional data (e.g., in genetics or text mining). In this work, we therefore introduce a modified Watson statistic that can cope with high-dimensionality. We derive its asymptotic null distribution as both~$n$ and~$p$ go to infinity. This is achieved in a universal asymptotic framework that allows $p$ to go to infinity arbitrarily fast (or slowly) as a function of~$n$. We further show that our results also provide high-dimensional tests for a problem that has recently attracted much attention, namely that of testing that the covariance matrix of a multinormal distribution has a ``$\thetab_0$-spiked" structure. Finally, a Monte Carlo simulation study corroborates our asymptotic results.  
\end{abstract}
Keywords: Directional statistics, high-dimensional data, location tests, principal component analysis, rotationally symmetric distributions, spherical mean

\section{Introduction}

\setcounter{equation} {0}

The technological advances and the ensuing new devices to collect and store data lead nowadays in many disciplines to data sets with very high dimension~$p$, often larger than the sample size~$n$. Consequently, there is a need for inferential methods that can deal with such high-dimensional data, and this has entailed  a huge activity related to high-dimen\-sional problems in the last decade. One- and multi-sample location problems have been investigated in 
\cite{SriFuj2006}, 
\cite{Sch2007}, 
\cite{CheQin2010}, 
\cite{Srietal2013}, 
and \cite{SriKub2013}, among others.  
Since the seminal paper \cite{LedWol2002}, problems related to covariance or scatter matrices have also been  thoroughly studied by several authors; see, e.g., \cite{Cheetal2010}, \cite{LiChe2012}, \cite{Onaetal2013} and \cite{JiaYan2013}.

In this paper, we are interested in high-dimensional \emph{directional} data, that is, in data lying on the unit hypersphere~$\mathcal{S}^{p-1}=\{\xb\in\R^p:\|\xb\|=\sqrt{\xb'\xb}=1\}$, with~$p$ large. Such data occur when only the direction of the observations and not their magnitude matters, and are extremely common, e.g., in magnetic resonance (\citealp{Dry2005}), gene-expression (\citealp{banerjee2003generative}), and 
text mining (\citealp{Banetal2005}). Inference for high-dimensional directional data has already been considered in several papers. For instance, \cite{BanGho2002,BanGho2004} and \cite{Banetal2005} investigate clustering methods in this context. Most asymptotic results from the literature, however, have been obtained as~$p$ goes to infinity, with $n$~fixed. This is the case of almost all results in \cite{Sta1982}, \cite{Wat1983a}, \cite{Wat1988}, and \cite{Dry2005}. To the best of our knowledge, the only $(n,p)$-asymptotic results available can be found in \cite{Dry2005}, \cite{CaJia12}, \cite{Caietal2013}, and \cite{PaiVer2013b}. However, 
\cite{Dry2005} imposes the stringent condition that~$p/n^2 \to \infty$ when studying the asymptotic behavior of the classical  pseudo-FvML location estimator (FvML here refers to \emph{Fisher-von Mises-Langevin} distributions; see below). \cite{CaJia12} and \cite{Caietal2013} consider various $(n, p)$-asymptotic regimes in the context of testing for uniformity on the unit sphere, but the tests to be used depend on the regime considered which makes practical implementation problematic. Finally,  \cite{PaiVer2013b} propose tests that are robust to the $(n, p)$-asymptotic regime considered; their tests, however, are sign procedures, hence are not based on sufficient statistics --- unlike the much more classical pseudo-FvML procedures.


In the present paper, we intend to overcome these limitations in the context of the spherical location problem,  one of the most fundamental problems in directional statistics. The natural distributional framework for this problem is provided by \emph{rotationally symmetric distributions} (see Section~\ref{prelim}), that form a semiparametric model, indexed by a finite-dimensional (location) parameter~$\thetab\in\mathcal{S}^{p-1}$ and an infinite-dimensional parameter~$F$. The spherical location problem consists in testing the null hypothesis $\mathcal{H}_0:\thetab=\thetab_0$ against alternative locations, where~$\thetab_0$ is a given unit vector and~$F$ remains unspecified. The  classical test for this problem is the so-called Watson test,  based on the sample mean of the observations; see  \cite{Wat1983b}. This test enjoys many desirable properties, and in particular is a \emph{pseudo-FvML} procedure~: in other words, it achieves optimality under FvML distributions, yet remains valid (in the sense that it meets the asymptotic nominal level constraint) under extremely mild assumptions on~$F$. 

Unfortunately, the Watson test cannot be used in the high-dimensional case, since its implementation crucially relies on fixed-$p$ asymptotic results. In view of the growing number of  high-dimensional directional data to analyze, this is a severe limitation. The aim of this paper hence is to define a modified Watson test statistic that can cope with high-dimensionality. We achieve this in such a way that asymptotic validity under virtually any rotationally symmetric distribution is maintained. Even better~: in contrast with earlier asymptotic investigations of high-dimensional pseudo-FvML procedures, our asymptotic results are ``universal'' in the sense that they only require that $p$~goes to infinity as~$n$ does ($p$ may go arbitrarily fast (or slowly) to infinity as a function of~$n$). Moreover, as a highly interesting by-product, we show that our procedure can be used to test the null hypothesis that the covariance matrix of a high-dimensional multinormal distribution is ``$\thetab_0$-spiked", meaning that it is of the form~$\Sigb= \sigma^2({\bf I}_p+ \lambda \thetab_0 \thetab_0\pr)$ for some~$\sigma^2,\lambda>0$ and~$\thetab_0\in\R^k$; see, e.g., \cite{johnstone2001} or the quite recent \cite{Onaetal2013} where this covariance structure has  been used as an alternative to sphericity. 

The outline of the paper is as follows. In Section~\ref{prelim}, we define the class of rotationally symmetric distributions and introduce the Watson test for spherical location. In Section~\ref{coolsec}, we propose a modified Watson test statistic and derive its asymptotic null distribution in the high-dimensional setting. We also prove that, in some cases, it is asymptotically equivalent to a sign test statistic. In Section~\ref{spike}, we show that the modified Watson test as well permits to test for a spiked covariance structure in multinormal distributions. A Monte Carlo simulation study is conducted in Section~\ref{simus}, while an Appendix collects the proofs of some technical lemmas.

\section{Rotational symmetry and the Watson test}
\label{prelim}


The distribution of the random $p$-vector~$\Xb$, with values on the unit hypersphere~$\mathcal{S}^{p-1}$, is \emph{rotationally symmetric} about location~$\thetab(\in\mathcal{S}^{p-1})$ if $\mathbf{O}\Xb$ is equal in distribution to~$\Xb$ for any orthogonal $p\times p$ matrix~$\mathbf{O}$ satisfying $\mathbf{O}\thetab=\thetab$; see \cite{Saw1978}.
Rotationally symmetric distributions are characterized by the location parameter~$\thetab$ and an infinite-dimensional parameter, the cumulative distribution function~$F$ of~$\Xb'\thetab$, hence they are of a semiparametric nature. The rotationally symmetric distribution associated with~$\thetab$ and~$F$ will be denoted as~$\mathcal{R}(\thetab,F)$ in the sequel. The most celebrated members of this family  are the Fisher-von Mises-Langevin distributions, corresponding to $F_{p,\kappa}(t)=c_{p,\kappa}\int_{-1}^t (1-s^2)^{(p-3)/2}\exp(\kappa s)\,ds$ ($t\in[-1,1]$), where~$c_{p,\kappa}$ is a normalization constant and~$\kappa(>0)$ is a \emph{concentration} parameter (the larger the value of~$\kappa$, the more concentrated about~$\thetab$ the distribution is); see \cite{MarJup2000} for further details.  


Let~$\Xb_1,\ldots,\Xb_n$ be a sequence of \mbox{i.i.d.} random unit vectors from~$\mathcal{R}(\thetab,F)$ and consider the problem of testing the null hypothesis~$\mathcal{H}_0:\thetab=\thetab_0$ against the alternative $\mathcal{H}_1:\thetab\neq\thetab_0$, where~$\thetab_0\in\mathcal{S}^{p-1}$ is fixed and~$F$ remains unspecified. Letting~$\bar{\Xb}:=\frac{1}{n}\sum_{i=1}^n \Xb_i$, the  classical test for this problem rejects the null for large values of the Watson statistic 
\begin{equation}
\label{Watson}
W_n 
:=
\frac{n (p-1)\bar{\Xb}\pr ({\bf I}_k- \thetab_0 \thetab_0\pr) \bar{\Xb}}{1-\frac{1}{n} \sum_{i=1}^n(\Xb_i\pr\thetab_0)^2} 
.
\end{equation}
Under very mild assumptions on~$F$, the fixed-$p$ asymptotic null distribution of~$W_n$ is chi-square with~$p-1$ degrees of freedom. The resulting test,~$\phi^W_n$ say, therefore rejects the null, at asymptotic level~$\alpha$, whenever~$W_n>\Psi_{p-1}^{-1}(1-\alpha)$, where $\Psi_{p-1}$ stands for the cumulative distribution function of the chi-square distribution with~$p-1$ degrees of freedom; see \cite{Wat1983b}.

Beyond achieving asymptotic level~$\alpha$ under virtually any rotationally symmetric distribution, $\phi^W_n$ is optimal --- more precisely, locally and asymptotically maximin, in the Le Cam sense --- when the underlying distribution is FvML; for details, we refer to~\cite{PaiVer2013}, where the asymptotic properties of~$\phi^W_n$  under local alternatives are  derived. Although~$\phi^W_n$ is based on the sample mean of the observations, these excellent power properties are not obtained at the expense of robustness, since observations by construction are on the unit hypersphere.  

Consequently, $\phi^W_n$ is a nice solution to the testing problem considered on all counts but one~: implementation is based on fixed-$p$ asymptotics, so that $\phi^W_n$ cannot be used when~$p$ is of the same order as, or even larger than,~$n$. The goal of the present work is therefore to derive a modified  Watson test, $\tilde{\phi}^W_n$ say, that can cope with high-dimensionality.

\section{A high-dimensional Watson test} 
\label{coolsec}

%
%

Consider the high-dimensional version of the testing problem $\mathcal{H}_0:\thetab=\thetab_0$ against $\mathcal{H}_1:\thetab\neq\thetab_0$, based on a triangular array of observations~$\Xb_{ni}$, $i=1,\ldots,n$, $n=1,2,\ldots,$ where $\Xb_{ni}$ takes values in~$\mathcal{S}^{p_n-1}$ and~$p_n$ goes to infinity with~$n$. In this section, we modify the Watson test statistic~$W_n$ in~(\ref{Watson}) to make it robust to high-dimensionality. To do so, consider the (null) \emph{tangent-normal decomposition} 
$\Xb_{ni}=(\Xb'_{ni}\thetab_0)\thetab_0+u_{ni} {\bf  S}_{ni}$,
where
$$
u_{ni}:= \sqrt{1- (\Xb_{ni}\pr \thetab_0)^2}
\quad
\textrm{ and }
\quad
\Sb_{ni}:=\frac{\Xb_{ni}-(\Xb_{ni}'\thetab_0)\thetab_0}{\|\Xb_{ni}-(\Xb_{ni}'\thetab_0)\thetab_0\|}
,
$$
and note that the Watson statistic rewrites 
\begin{eqnarray*}
W_n 
=
 \frac{p_n-1}{\sum_{i=1}^n u_{ni}^2} 
\sum_{i,j=1}^n 
u_{ni}u_{nj} \Sb_{ni}\pr\Sb_{nj}
&=&
 \frac{p_n-1}{\sum_{i=1}^n u_{ni}^2} 
\Bigg( 
\sum_{i=1}^n u_{ni}^2
+
2 \sum_{1\leq i < j\leq n} 
u_{ni}u_{nj} \Sb_{ni}\pr\Sb_{nj}
 \Bigg) 
\nonumber 
\\[2mm] 
&= &
(p_n-1) + 
\frac{2(p_n-1)}{\sum_{i=1}^n u_{ni}^2} 
\sum_{1\leq i < j\leq n} u_{ni} u_{nj} \Sb_{ni}\pr\Sb_{nj} 
.
\end{eqnarray*}
We then introduce the modified statistic
\begin{equation}
\label{Watsonbis}
\tilde{W}_n
:=
\frac{W_n-(p_n-1)}{\sqrt{2(p_n-1)}}
=  
\bigg(
\frac{\sqrt{2(p_n-1)}}{n{\rm E}[u_{n1}^2]}
\sum_{1\leq i < j\leq n} u_{ni} u_{nj} \Sb_{ni}\pr\Sb_{nj} 
\bigg)
\,\Big/\,
\bigg(
\frac{\frac{1}{n}\sum_{i=1}^n u_{ni}^2}{{\rm E}[u_{n1}^2]}
\bigg)
.
\end{equation}
The following result, that provides the $(n,p)$-asymptotic null distribution of~$\tilde{W}_n$, is the main result of the paper.
%
\vspace{2mm}


\begin{theorem}
\label{maintheor}
Let $\Xb_{ni}$, $i=1,\ldots,n$, $n=1,2,\ldots,$ form a triangular array of random vectors satisfying the following conditions~: 
(i) for any $n$, $\Xb_{n1},\Xb_{n2},\ldots,\Xb_{nn}$ are mutually independent and share a common rotationally symmetric distribution on~$\mathcal{S}^{p_n-1}$ with location $\thetab_0$;
(ii) $p_n\to \infty$ as $\ny$;
(iii)  
${\rm E}[u_{n1}^2]>0$ and
(iv) ${\rm E}[u_{n1}^4]/({\rm E}[u_{n1}^2])^2 =o(n)$ as $\ny$.
Then $\tilde{W}_n$ is asymptotically standard normal.
\end{theorem}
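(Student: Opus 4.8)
The plan is to handle the denominator of $\tilde W_n$ by a law of large numbers and to establish asymptotic normality of the numerator via a martingale central limit theorem. First I would record the distributional consequences of rotational symmetry about $\thetab_0$: the sign vectors $\Sb_{ni}$ are i.i.d.\ uniform on the unit sphere of the tangent space $\thetab_0^\perp$, are independent of the $u_{ni}$'s, and satisfy ${\rm E}[\Sb_{ni}]=\zerob$, ${\rm E}[\Sb_{ni}\Sb_{ni}\pr]=\frac{1}{p_n-1}({\bf I}_{p_n}-\thetab_0\thetab_0\pr)$, and ${\rm E}[(\Sb_{ni}\pr\Sb_{nj})^2]=1/(p_n-1)$ for $i\neq j$. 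The denominator of $\tilde W_n$ is $\frac{1}{n}\sum_i u_{ni}^2/{\rm E}[u_{n1}^2]$, whose variance is at most $n^{-1}{\rm E}[u_{n1}^4]/({\rm E}[u_{n1}^2])^2=o(1)$ by condition (iv); it therefore converges to $1$ in probability, and by Slutsky's theorem it suffices to prove that $N_n:=\frac{\sqrt{2(p_n-1)}}{n{\rm E}[u_{n1}^2]}\sum_{i<j}u_{ni}u_{nj}\Sb_{ni}\pr\Sb_{nj}$ is asymptotically standard normal.

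Next I would expose the martingale structure. Writing $\Wb_{j-1}:=\sum_{i<j}u_{ni}\Sb_{ni}$ and $D_{nj}:=u_{nj}\Sb_{nj}\pr\Wb_{j-1}$, one has $N_n=c_n\sum_{j=2}^n D_{nj}$ with $c_n:=\sqrt{2(p_n-1)}/(n{\rm E}[u_{n1}^2])$. Since $(u_{nj},\Sb_{nj})$ is independent of $\mathcal{F}_{n,j-1}:=\sigma(\Xb_{n1},\ldots,\Xb_{n,j-1})$ and ${\rm E}[u_{nj}\Sb_{nj}]=\zerob$, the triangular array $\xi_{nj}:=c_nD_{nj}$ is a martingale difference array. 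It then remains to verify the two standard conditions of the martingale CLT: (a) the conditional variances $V_n:=\sum_{j}{\rm E}[\xi_{nj}^2\mid\mathcal{F}_{n,j-1}]$ converge to $1$ in probability, and (b) a Lyapunov condition, which I would establish in the strong form $\sum_{j}{\rm E}[\xi_{nj}^4]\to0$.

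For (a), using the independence of $(u_{nj},\Sb_{nj})$ from $\mathcal{F}_{n,j-1}$, the covariance of $\Sb_{nj}$, and the fact that $\Wb_{j-1}\in\thetab_0^\perp$, the conditional variance collapses to ${\rm E}[\xi_{nj}^2\mid\mathcal{F}_{n,j-1}]=\frac{2}{n^2{\rm E}[u_{n1}^2]}\|\Wb_{j-1}\|^2$, so $V_n=\frac{2}{n^2{\rm E}[u_{n1}^2]}\sum_{j=2}^n\|\Wb_{j-1}\|^2$. A direct first-moment computation gives ${\rm E}[V_n]=(n-1)/n\to1$, and the crux of the whole argument is to show ${\rm Var}(V_n)\to0$. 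Decomposing $\|\Wb_{j-1}\|^2$ into its diagonal part $\sum_{i<j}u_{ni}^2$ and its off-diagonal part $2\sum_{i<i'<j}u_{ni}u_{ni'}\Sb_{ni}\pr\Sb_{ni'}$, these two pieces are uncorrelated; after aggregation over $j$, the diagonal part has variance of order $n^3\,{\rm Var}(u_{n1}^2)$ and hence contributes $O\big(n^{-1}{\rm E}[u_{n1}^4]/({\rm E}[u_{n1}^2])^2\big)=o(1)$ by (iv), while the off-diagonal part contributes $O(1/p_n)=o(1)$ by (ii), each surviving cross-term carrying the factor ${\rm E}[(\Sb_{ni}\pr\Sb_{ni'})^2]=1/(p_n-1)$.

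For (b), conditioning on $\mathcal{F}_{n,j-1}$ and invoking the fourth spherical moment ${\rm E}[(\Sb_{nj}\pr\vb)^4]=\frac{3}{(p_n-1)(p_n+1)}\|\vb\|^4$ for $\vb\in\thetab_0^\perp$ reduces $\sum_j{\rm E}[\xi_{nj}^4]$ to a multiple of $\frac{(p_n-1){\rm E}[u_{n1}^4]}{n^4(p_n+1)({\rm E}[u_{n1}^2])^4}\sum_j{\rm E}[\|\Wb_{j-1}\|^4]$; estimating ${\rm E}[\|\Wb_{j-1}\|^4]$, whose dominant term is of order $j^2({\rm E}[u_{n1}^2])^2$, shows each contribution is of order $r^2/n^2$, $r/n$, or $r/(np_n)$ with $r:={\rm E}[u_{n1}^4]/({\rm E}[u_{n1}^2])^2=o(n)$, hence $o(1)$. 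The martingale CLT then gives $N_n\tendL\mathcal{N}(0,1)$, and Slutsky concludes that $\tilde W_n\tendL\mathcal{N}(0,1)$. The main obstacle is the verification of ${\rm Var}(V_n)\to0$ in step (a): this is exactly where the universal $(n,p)$-framework is made to work, with (ii) taming the genuinely high-dimensional cross-terms and the moment condition (iv) taming the fluctuations of $\sum_i u_{ni}^2$.
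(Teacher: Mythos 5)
Your proposal is correct and takes essentially the same route as the paper: Slutsky for the denominator, then the martingale CLT applied to the very same martingale-difference array, with your conditional variance $\frac{2}{n^2 {\rm E}[u_{n1}^2]}\|\Wb_{j-1}\|^2$ coinciding with the paper's $\sigma^2_{n\ell}$ and your diagonal/off-diagonal split of ${\rm Var}(V_n)$ matching its $T_1^{(n)}$/$T_2^{(n)}$ decomposition. The only immaterial deviation is that you verify the stronger Lyapunov condition $\sum_{j}{\rm E}[\xi_{nj}^4]\to 0$ directly, whereas the paper reaches the Lindeberg condition from the same fourth-moment bounds via Cauchy--Schwarz and Chebyshev.
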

\vspace{3mm}

The assumptions of Theorem~\ref{maintheor} are extremely mild. Note in particular that it is not assumed that the common distribution of the $\Xb_{ni}$'s is absolutely continuous with respect to the surface area measure on~$\mathcal{S}^{p_n-1}$. Imposing~(iii) is strictly equivalent to requiring that $\Xb_{n1}\neq \thetab_0$ almost surely, which ensures that the~$\Sb_{ni}$'s are well-defined with probability one. Finally, a sufficient (yet not necessary) condition for~(iv) is that 
$\sqrt{n} \, {\rm E}[u_{n1}^2]
\to\infty$
as~$\ny$. In other words, if~(iv) does not hold, we must then have that, for some constant~$C>0$, 
\begin{equation}
\label{cond}
{\rm E}[(\Xb_{n1}\pr\thetab_0)^2] \geq 1- \frac{C}{\sqrt{n}}
\end{equation} 
for infinitely many~$n$. In the high-dimensional setup considered, (\ref{cond}) is extremely pathological, since it corresponds to the distribution of~$\Xb_{n1}$ concentrating in \emph{one} particular direction --- namely, the direction~$\thetab_0$ --- in the expanding Euclidean space~$\R^{p_n}$. Most importantly, it should be noted that (ii) allows~$p_n$ to go to infinity in an arbitrary way with~$n$, so that Theorem~\ref{maintheor} provides a ``$(n,p)$-universal" asymptotic distribution result for the modified Watson statistic. 

%
%

%
%

The ratio decomposition of~$\tilde{W}_n$ in~(\ref{Watsonbis}) invites to base the proof of Theorem~\ref{maintheor} on the Slutsky Lemma. The stochastic convergence of the denominator is taken care of in 
\vspace{2mm}

\begin{proposition} 
\label{largenum}
Under the assumptions of Theorem~\ref{maintheor}, 
$$
\frac{\frac{1}{n} \sum_{i=1}^{n} u_{ni}^2}{{\rm E}[u_{n1}^2]}
\to 1
$$ 
in quadratic mean as $n\rightarrow\infty$.
\end{proposition}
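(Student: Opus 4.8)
The plan is to exploit the fact that, within each row of the array, condition~(i) renders $u_{n1}^2,\ldots,u_{nn}^2$ independent and identically distributed, so that the displayed ratio is a normalized sample mean whose $L^2$-distance to~$1$ reduces to a single variance computation. Writing $R_n:=\big(\frac1n\sum_{i=1}^n u_{ni}^2\big)\big/{\rm E}[u_{n1}^2]$ (which is well-defined since ${\rm E}[u_{n1}^2]>0$ by~(iii)), quadratic-mean convergence $R_n\to1$ is by definition the statement that ${\rm E}[(R_n-1)^2]\to0$ as~$\ny$. Since ${\rm E}[u_{ni}^2]={\rm E}[u_{n1}^2]$ for every~$i$, the numerator of~$R_n$ has expectation ${\rm E}[u_{n1}^2]$, whence ${\rm E}[R_n]=1$ and ${\rm E}[(R_n-1)^2]={\rm Var}(R_n)$.

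First I would observe that $u_{ni}^2=1-(\Xb_{ni}\pr\thetab_0)^2\in[0,1]$, so all moments of~$u_{ni}^2$ are finite and the computation below is unproblematic. Next, using the independence of $u_{n1}^2,\ldots,u_{nn}^2$ within the $n$th row, I would compute
\begin{equation*}
{\rm E}[(R_n-1)^2]
=\frac{{\rm Var}\big(\frac1n\sum_{i=1}^n u_{ni}^2\big)}{({\rm E}[u_{n1}^2])^2}
=\frac{{\rm Var}(u_{n1}^2)}{n\,({\rm E}[u_{n1}^2])^2}
=\frac1n\left(\frac{{\rm E}[u_{n1}^4]}{({\rm E}[u_{n1}^2])^2}-1\right).
\end{equation*}
Finally, condition~(iv) states precisely that ${\rm E}[u_{n1}^4]/({\rm E}[u_{n1}^2])^2=o(n)$, so the bracketed factor is $o(n)$ and the right-hand side is $o(1)$; the residual term $-1/n$ also tends to~$0$. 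Hence ${\rm E}[(R_n-1)^2]\to0$, which is the claim.

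There is no genuine obstacle here: the result is a one-line variance bound, and the only points requiring care are bookkeeping. The essential structural remark --- and the reason the theorem is phrased with the scale-free moment ratio~(iv) rather than a raw variance condition --- is that normalizing by $({\rm E}[u_{n1}^2])^2$ is exactly what converts ${\rm Var}(u_{n1}^2)/n$ into the dimension-free quantity controlled by~(iv). This matters in the present triangular-array, universal-in-$p$ regime, where ${\rm E}[u_{n1}^2]$ itself may shrink as $p_n\to\infty$, so that an unnormalized variance would be the wrong object to bound.
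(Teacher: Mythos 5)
Your proof is correct and is essentially the paper's own argument: both exploit the row-wise i.i.d.\ structure to reduce ${\rm E}[(R_n-1)^2]$ to ${\rm Var}[u_{n1}^2]/\big(n({\rm E}[u_{n1}^2])^2\big)$ and then invoke condition~(iv). The only cosmetic difference is that you expand the variance exactly as $\frac1n\big({\rm E}[u_{n1}^4]/({\rm E}[u_{n1}^2])^2-1\big)$, whereas the paper simply bounds ${\rm Var}[u_{n1}^2]\leq{\rm E}[u_{n1}^4]$; both conclude identically from~(iv).
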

\vspace{0mm}

\begin{proof}[of Proposition~\ref{largenum}]
Since
\begin{eqnarray*}
\lefteqn{
\hspace{-3mm}
{\rm E}
\Bigg[
\left(\frac{\frac{1}{n} \sum_{i=1}^{n} u_{ni}^2}{{\rm E}[u_{n1}^2]}-1\right)^2
\Bigg]
=
\frac{1}{({\rm E}[u_{n1}^2])^2}
\,
{\rm E}
\Bigg[
\Bigg(
\frac{1}{n} \sum_{i=1}^{n} u_{ni}^2 - {\rm E}[u_{n1}^2]
\Bigg)^2
\Bigg]
}
\\[2mm]
& &
\hspace{5mm}
=
\frac{1}{({\rm E}[u_{n1}^2])^2}
\,
{\rm Var}\Bigg[\frac{1}{n} \sum_{i=1}^{n} u_{ni}^2\Bigg]
=
\frac{{\rm Var}[u_{n1}^2]}{n({\rm E}[u_{n1}^2])^2}
\leq 
 \frac{{\rm E}[u_{n1}^4]}{n({\rm E}[u_{n1}^2])^2},
\end{eqnarray*}
the result follows from Condition~(iv) in Theorem~\ref{maintheor}.
\end{proof}
\vspace{2mm}
 

To establish Theorem~\ref{maintheor}, it is therefore sufficient to prove
\vspace{2mm}

\begin{proposition} 
\label{prop2}
Under the assumptions of Theorem~\ref{maintheor}, 
$$
R_n
:=
\frac{\sqrt{2(p_n-1)}}{n{\rm E}[u_{n1}^2]}
\sum_{1\leq i < j\leq n} u_{ni} u_{nj} \Sb_{ni}\pr\Sb_{nj} 
$$ 
is asymptotically standard normal.
\end{proposition}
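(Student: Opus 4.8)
The plan is to recognise $R_n$ as a normalised sum of martingale differences and to invoke a martingale central limit theorem. The only distributional inputs I need come from the rotational symmetry assumption (i): conditionally on the modulus $u_{ni}$, the sign $\Sb_{ni}$ is uniformly distributed on the unit sphere $\mathcal{S}^{p_n-2}$ of the hyperplane $\thetab_0^\perp$ and is independent of $u_{ni}$. This yields ${\rm E}[\Sb_{ni}]=\zerob$, ${\rm E}[\Sb_{ni}\Sb_{ni}\pr]=(p_n-1)^{-1}(\mathbf{I}_{p_n}-\thetab_0\thetab_0\pr)$, and, for $i\neq j$, ${\rm E}[(\Sb_{ni}\pr\Sb_{nj})^2]=(p_n-1)^{-1}$, together with the fourth-order bound ${\rm E}[(\Sb_{nj}\pr\vb)^4]=3\big((p_n-1)(p_n+1)\big)^{-1}\|\vb\|^4$ for any $\vb\in\thetab_0^\perp$.

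Writing $\Tb_{n,j-1}:=\sum_{i=1}^{j-1}u_{ni}\Sb_{ni}$ and $D_{nj}:=u_{nj}\Sb_{nj}\pr\Tb_{n,j-1}$, I would telescope the double sum as $\sum_{1\leq i<j\leq n}u_{ni}u_{nj}\Sb_{ni}\pr\Sb_{nj}=\sum_{j=2}^{n}D_{nj}$, so that $R_n=c_n\sum_{j=2}^{n}D_{nj}$ with $c_n=\sqrt{2(p_n-1)}/(n\,{\rm E}[u_{n1}^2])$. Relative to the natural filtration $\mathcal{F}_{j}=\sigma(\Xb_{n1},\ldots,\Xb_{nj})$, the $D_{nj}$ form a martingale difference array: since $\Xb_{nj}$ is independent of $\mathcal{F}_{j-1}$ and ${\rm E}[u_{nj}\Sb_{nj}]=\zerob$, one obtains ${\rm E}[D_{nj}\mid\mathcal{F}_{j-1}]=\big({\rm E}[u_{nj}\Sb_{nj}]\big)\pr\Tb_{n,j-1}=0$. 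It then remains to check the two hypotheses of the martingale CLT (McLeish, or Hall and Heyde, Theorem~3.2): convergence of the conditional variance and a negligibility condition.

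For the conditional variance I would use the independence of $(u_{nj},\Sb_{nj})$ from $\mathcal{F}_{j-1}$ to get ${\rm E}[D_{nj}^2\mid\mathcal{F}_{j-1}]={\rm E}[u_{n1}^2](p_n-1)^{-1}\|\Tb_{n,j-1}\|^2$, whence $c_n^2\sum_{j=2}^{n}{\rm E}[D_{nj}^2\mid\mathcal{F}_{j-1}]=2\big(n^2{\rm E}[u_{n1}^2]\big)^{-1}\sum_{j=2}^{n}\|\Tb_{n,j-1}\|^2$. Expanding $\|\Tb_{n,j-1}\|^2=\sum_{i<j}u_{ni}^2+2\sum_{i<k\leq j-1}u_{ni}u_{nk}\Sb_{ni}\pr\Sb_{nk}$ splits this into a diagonal and an off-diagonal part. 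The diagonal part is a weighted average of the i.i.d.\ variables $u_{ni}^2$, with mean tending to $1$ and variance $O\big(n^{-1}{\rm E}[u_{n1}^4]/({\rm E}[u_{n1}^2])^2\big)=o(1)$ by (iv). The off-diagonal part has mean zero, and since distinct $\Sb$-cross-products are uncorrelated and each carries the factor $(p_n-1)^{-1}$, its second moment is $O\big((p_n-1)^{-1}\big)=o(1)$ by (ii); hence $c_n^2\sum_j{\rm E}[D_{nj}^2\mid\mathcal{F}_{j-1}]\to1$ in probability.

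For the negligibility condition I would establish the Lyapunov bound $c_n^4\sum_{j=2}^{n}{\rm E}[D_{nj}^4]\to0$, which simultaneously forces $\max_j|c_nD_{nj}|\to0$ in probability and the conditional Lindeberg condition. The fourth-order sphere moment gives ${\rm E}[D_{nj}^4]=3{\rm E}[u_{n1}^4]\big((p_n-1)(p_n+1)\big)^{-1}{\rm E}[\|\Tb_{n,j-1}\|^4]$, while a direct expansion yields ${\rm E}[\|\Tb_{n,j-1}\|^4]\leq(j-1){\rm E}[u_{n1}^4]+(j-1)^2({\rm E}[u_{n1}^2])^2\big(1+2(p_n-1)^{-1}\big)$. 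Summing over $j$ and inserting $c_n^4$, the two resulting terms are $O(\gamma_n^2/n^2)$ and $O(\gamma_n/n)$ with $\gamma_n:={\rm E}[u_{n1}^4]/({\rm E}[u_{n1}^2])^2$, and both vanish since $\gamma_n=o(n)$ by (iv). Applying the martingale CLT then gives $R_n\tendL N(0,1)$. The main obstacle is the conditional-variance step: one must show the off-diagonal quadratic form in the $\Sb$'s is asymptotically negligible, and it is precisely the growing dimension (ii) that supplies the decisive $(p_n-1)^{-1}$ gain, while (iv) keeps the heavier-tailed diagonal part in check; balancing these two effects, rather than any single estimate, is the crux.
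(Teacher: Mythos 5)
Your proof is correct and follows essentially the same route as the paper: the same martingale differences (your $c_nD_{nj}$ are exactly the paper's $Y_{n\ell}$) with respect to the same filtration, the same diagonal/off-diagonal split of the conditional variance (the paper's $T_1^{(n)}$ and $T_2^{(n)}$ terms, controlled by (iv) and (ii) respectively), and a fourth-moment verification of the Lindeberg condition. The only cosmetic differences are that you work vectorially through $\Tb_{n,j-1}$ and conditional sphere moments where the paper uses the pairwise-independent scalars $\rho_{n,ij}=\Sb_{ni}'\Sb_{nj}$, and you close the Lindeberg step with the direct Lyapunov bound $\varepsilon^{-2}\sum_{\ell}{\rm E}[Y_{n\ell}^4]$ rather than the paper's Cauchy--Schwarz-plus-Chebyshev estimate --- both yield the same $O(\gamma_n^2/n^2)+O(\gamma_n/n)$ rate, yours slightly more simply.
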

\vspace{2mm}

The proof of this proposition is much more delicate and will be based on the following martingale Central Limit Theorem; see Theorem~35.12 in \cite{Bil1995}. 
\vspace{2mm}

\begin{theorem}
\label{Bil}
Assume that, for each $n$, $Z_{n1},Z_{n2},\ldots$ is a martingale relative to the filtration $\mathcal{F}_{n1},\mathcal{F}_{n2},\ldots$ and define $Y_{n\ell}=Z_{n\ell}-Z_{n,\ell-1}$. Suppose that the $Y_{n\ell}$'s have finite second-order moments and let $\sigma^2_{n\ell}={\rm E}[Y_{n\ell}^2\,|\,\mathcal{F}_{n,\ell-1}]$ (with $\mathcal{F}_{n0}=\{\emptyset,\Omega\}$). Assume that $\sum_{\ell=1}^\infty Y_{n\ell}$ and $\sum_{\ell=1}^\infty \sigma^2_{n\ell}$ converge with probability 1. Then, if, for $n\rightarrow\infty$, 
\begin{equation}
\label{35.35}
\sum_{\ell=1}^\infty \sigma^2_{n\ell}=\sigma^2+o_{\rm P}(1),
\end{equation}
where $\sigma$ is a positive real number, and
\begin{equation}
\label{36.36}
\sum_{\ell=1}^\infty {\rm E}\big[Y_{n\ell}^2\,\mathbb{I}[|Y_{n\ell}|\geq\varepsilon]\big]\rightarrow0 \quad\forall\varepsilon>0,
\end{equation}
we have that $\sigma^{-1}\sum_{\ell=1}^\infty Y_{n\ell}$ is asymptotically standard normal.
\end{theorem}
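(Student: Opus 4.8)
The plan is to follow the classical characteristic-function route to the Lindeberg--Feller martingale CLT. By L\'evy's continuity theorem it suffices to prove pointwise convergence of characteristic functions, that is, ${\rm E}[\exp(it\sigma^{-1}S_n)]\to e^{-t^2/2}$ for every real $t$, where $S_n:=\sum_\ell Y_{n\ell}$; after rescaling $t$ this is equivalent to ${\rm E}[e^{itS_n}]\to e^{-t^2\sigma^2/2}$ (the assumption $\sigma>0$ is what makes $\sigma^{-1}S_n$ meaningful, and the a.s.\ convergence of $\sum_\ell Y_{n\ell}$ and $\sum_\ell\sigma^2_{n\ell}$ postulated in the statement ensures $S_n$ and $T_{n\infty}:=\sum_\ell\sigma^2_{n\ell}$ are well-defined random variables). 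The heuristic is that, conditionally on the past, $Y_{n\ell}$ is centered with variance $\sigma^2_{n\ell}$, so ${\rm E}[e^{itY_{n\ell}}\,|\,\mathcal{F}_{n,\ell-1}]\approx 1-\tfrac{t^2}{2}\sigma^2_{n\ell}\approx e^{-t^2\sigma^2_{n\ell}/2}$; multiplying these one-step factors and invoking $T_{n\infty}\to\sigma^2$ from~(\ref{35.35}) should telescope to the Gaussian value $e^{-t^2\sigma^2/2}$.

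To make this rigorous I would introduce the auxiliary sequence $W_{n\ell}:=\exp\!\big(itZ_{n\ell}+\tfrac{t^2}{2}T_{n\ell}\big)$, with $T_{n\ell}:=\sum_{k\le\ell}\sigma^2_{nk}$ and $W_{n0}:=1$, so that the target reduces to ${\rm E}[W_{n\infty}]\to1$: indeed $W_{n\infty}=e^{itS_n}e^{t^2T_{n\infty}/2}$, and the factor $e^{t^2T_{n\infty}/2}\to e^{t^2\sigma^2/2}$ in probability by~(\ref{35.35}) may be peeled off at the end. Writing $W_{n\infty}-1$ as the telescoping sum $\sum_\ell(W_{n\ell}-W_{n,\ell-1})$ and conditioning on $\mathcal{F}_{n,\ell-1}$ gives
\[
{\rm E}[W_{n\infty}]-1=\sum_\ell{\rm E}\Big[W_{n,\ell-1}\big(e^{t^2\sigma^2_{n\ell}/2}\,{\rm E}[e^{itY_{n\ell}}\,|\,\mathcal{F}_{n,\ell-1}]-1\big)\Big].
\]
A second-order Taylor expansion of $z\mapsto e^{iz}$, together with ${\rm E}[Y_{n\ell}\,|\,\mathcal{F}_{n,\ell-1}]=0$ and ${\rm E}[Y_{n\ell}^2\,|\,\mathcal{F}_{n,\ell-1}]=\sigma^2_{n\ell}$, yields ${\rm E}[e^{itY_{n\ell}}\,|\,\mathcal{F}_{n,\ell-1}]=1-\tfrac{t^2}{2}\sigma^2_{n\ell}+r_{n\ell}$, the remainder obeying (up to a universal constant) $|e^{ix}-1-ix+\tfrac{x^2}{2}|\le\min(|x|^3,x^2)$. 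Each bracketed term is then $O(\sigma^4_{n\ell})+|r_{n\ell}|$, and splitting on $\{|Y_{n\ell}|<\varepsilon\}$ versus $\{|Y_{n\ell}|\ge\varepsilon\}$ bounds $\sum_\ell|r_{n\ell}|$ by $\varepsilon\,|t|^3\,T_{n\infty}$ plus $t^2$ times the Lindeberg quantity in~(\ref{36.36}); letting $\varepsilon\to0$ sends both to zero.

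Two ingredients feed these estimates. First, (\ref{36.36}) forces \emph{asymptotic negligibility}, $\max_\ell\sigma^2_{n\ell}\to0$ in probability, via $\sigma^2_{n\ell}\le\varepsilon^2+{\rm E}[Y_{n\ell}^2\mathbb{I}(|Y_{n\ell}|\ge\varepsilon)\,|\,\mathcal{F}_{n,\ell-1}]$ and the vanishing of the conditional tail sum; combined with $T_{n\infty}\to\sigma^2$ this gives $\sum_\ell\sigma^4_{n\ell}\le(\max_\ell\sigma^2_{n\ell})\,T_{n\infty}\to0$. Second, and this is where the \textbf{main obstacle} lies, the telescoping identity must be controlled in $L^1$, which demands a uniform domination of $|W_{n\ell}|=e^{t^2T_{n\ell}/2}$; since $T_{n\infty}$ is governed only \emph{in probability} by~(\ref{35.35}) and need not be uniformly bounded, one cannot pass the in-probability estimates directly through the expectation. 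The standard remedy is a stopping-time (truncation) device: fix $\delta>0$ and halt the array at the first index where $T_{n\ell}$ exceeds $\sigma^2+\delta$. After stopping, every partial variance sum is bounded by $\sigma^2+\delta$, so $|W_{n\ell}|\le e^{t^2(\sigma^2+\delta)/2}$ dominates the telescoping sum and legitimizes dominated convergence, while by~(\ref{35.35}) the stopping occurs with probability tending to $0$ and hence does not perturb the limiting characteristic function. Assembling these pieces yields ${\rm E}[W_{n\infty}]\to1$, whence ${\rm E}[e^{itS_n}]\to e^{-t^2\sigma^2/2}$, and the stated asymptotic normality of $\sigma^{-1}\sum_\ell Y_{n\ell}$ follows from L\'evy's continuity theorem.
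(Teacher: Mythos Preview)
The paper does not prove this theorem at all: it is quoted verbatim as Theorem~35.12 of Billingsley's \emph{Probability and Measure} and used as a black box to establish Proposition~\ref{prop2}. There is therefore no ``paper's own proof'' to compare against.

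That said, your sketch is essentially the standard characteristic-function argument that Billingsley himself gives, and it is correct in outline. The exponential martingale $W_{n\ell}=\exp(itZ_{n\ell}+\tfrac{t^2}{2}T_{n\ell})$, the telescoping identity, the second-order Taylor bound $|e^{ix}-1-ix+\tfrac{x^2}{2}|\le C\min(|x|^3,x^2)$ with the $\varepsilon$-split that feeds the Lindeberg condition~(\ref{36.36}) into control of the remainder, and the deduction of asymptotic negligibility $\max_\ell\sigma^2_{n\ell}\to 0$ in probability are all the right ingredients. You also correctly identify the one genuine technical obstacle---that~(\ref{35.35}) controls $T_{n\infty}$ only in probability, so the modulus $|W_{n\ell}|=e^{t^2T_{n\ell}/2}$ is not a priori bounded---and the stopping-time truncation you propose is exactly the classical fix. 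One minor point worth tightening in a full write-up: after stopping at $\tau_n=\inf\{\ell:T_{n\ell}>\sigma^2+\delta\}$ the sum $T_{n,\tau_n}$ may overshoot $\sigma^2+\delta$ by one increment $\sigma^2_{n\tau_n}$, so the uniform bound on $|W_{n\ell}|$ actually requires the negligibility of $\max_\ell\sigma^2_{n\ell}$ (which you have already derived); and the ``peeling off'' of $e^{t^2T_{n\infty}/2}$ at the end also needs the post-truncation boundedness to pass from convergence in probability to convergence of expectations.
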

\vspace{2mm}

In order to apply this result, we need to identify the distinct quantities in the present setting. Let ${\cal F}_{n\ell}$ be the $\sigma$-algebra generated by $\Xb_{n1}, \ldots, \Xb_{n \ell}$ and denote by ${\rm E}_{n\ell} [.]$ the conditional expectation with respect to ${\cal F}_{n\ell}$. Then, letting
$$
Y_{n\ell}
:=
{\rm E}_{n\ell} [R_n]-{\rm E}_{n,\ell-1} [R_n] 
=
\frac{\sqrt{2(p_n-1)}}{n {\rm E}[u_{n1}^2]}
\,
 \sum_{i=1}^{\ell-1} u_{ni} u_{n\ell} \Sb_{ni}\pr\Sb_{n\ell}
$$
for $\ell=1,\ldots,n$ and (as in \cite{Bil1995}) $Y_{n\ell}=0$ for~$\ell>n$, 
 we clearly have that
$R_n= \sum_{\ell=2}^nY_{n\ell}$, where the $Y_{n\ell}$'s have finite second-order moments. Also, $\sum_{\ell=2}^\infty Y_{n\ell}=\sum_{\ell=2}^n Y_{n\ell}$ and $\sum_{\ell=2}^\infty \sigma^2_{n\ell}=\sum_{\ell=2}^n \sigma^2_{n\ell}$, with $\sigma^2_{n\ell}={\rm E}_{n,\ell-1} [Y_{n\ell}^2]$ as in Theorem \ref{Bil}, and both converge with probability~1, as required. Now, the crucial conditions~\eqref{35.35} and~\eqref{36.36} are shown to hold in the subsequent lemmas (see the Appendix for the proofs).

\begin{lemma}
\label{firstlem}
Under the assumptions of Theorem~\ref{maintheor}, 
$\sum_{\ell=2}^n \sigma^2_{n\ell}\to 1$
in quadratic mean as~$n\to~\infty$. 
\end{lemma}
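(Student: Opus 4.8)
The plan is to verify the two hypotheses of the martingale CLT (Theorem~\ref{Bil}) by first obtaining a closed form for the conditional variances $\sigma^2_{n\ell}={\rm E}_{n,\ell-1}[Y_{n\ell}^2]$ and then showing their sum concentrates at~$1$. First I would expand the square in $Y_{n\ell}$; conditionally on $\mathcal F_{n,\ell-1}$ only $\Xb_{n\ell}$ is random, and the key input is the rotational symmetry of $\Xb_{n\ell}$ about~$\thetab_0$, which makes $\Sb_{n\ell}$ uniform on the unit sphere of $\thetab_0^\perp$, independent of $u_{n\ell}$. This gives ${\rm E}[u_{n\ell}^2\Sb_{n\ell}\Sb_{n\ell}\pr]=\frac{{\rm E}[u_{n1}^2]}{p_n-1}({\bf I}_{p_n}-\thetab_0\thetab_0\pr)$. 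Since each $\Sb_{ni}$ ($i\le\ell-1$) lies in $\thetab_0^\perp$, the factors $p_n-1$ and ${\rm E}[u_{n1}^2]$ cancel against those in $Y_{n\ell}$, leaving the clean expression $\sigma^2_{n\ell}=\frac{2}{n^2{\rm E}[u_{n1}^2]}\big\|\sum_{i=1}^{\ell-1}u_{ni}\Sb_{ni}\big\|^2$.

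Next I would sum over $\ell$. Writing $\sigma^2_{n\ell}$ as a double sum over $i,j\le\ell-1$ and exchanging the order of summation, each pair $(i,j)$ is counted with weight $n-\max(i,j)$, giving
\begin{equation*}
S_n:=\sum_{\ell=2}^n\sigma^2_{n\ell}=\frac{2}{n^2{\rm E}[u_{n1}^2]}\sum_{i,j=1}^{n-1}\big(n-\max(i,j)\big)\,u_{ni}u_{nj}\Sb_{ni}\pr\Sb_{nj}.
\end{equation*}
A short computation of the mean then handles the centering: the cross terms ($i\ne j$) vanish because $\Sb_{ni}$ is independent of $\Sb_{nj}$ with mean zero, while each diagonal term contributes ${\rm E}[u_{n1}^2]$, yielding ${\rm E}[S_n]=\frac{2}{n^2}\sum_{\ell=2}^n(\ell-1)=\frac{n-1}{n}\to1$. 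It therefore remains to prove ${\rm Var}[S_n]\to0$, since ${\rm E}[(S_n-1)^2]={\rm Var}[S_n]+({\rm E}[S_n]-1)^2$ and the bias term is $O(n^{-2})$.

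The variance is where the two moment conditions enter, and it is the main obstacle. I would split the double sum into its diagonal part $D=\sum_{i}(n-i)u_{ni}^2$ and its off-diagonal part $O=2\sum_{i<j}(n-j)u_{ni}u_{nj}\Sb_{ni}\pr\Sb_{nj}$. Because $\Sb_{ni}$ has mean zero and is independent of everything carrying a different index, ${\rm E}[O\mid u_{n1},\dots,u_{n,n-1}]=0$; this kills ${\rm Cov}(D,O)$ and shows ${\rm E}[O]=0$. For $D$ one has ${\rm Var}(D)={\rm Var}[u_{n1}^2]\sum_{i=1}^{n-1}(n-i)^2\le\tfrac13 n^3{\rm E}[u_{n1}^4]$, so its contribution to ${\rm Var}[S_n]$ is of order $n^{-1}{\rm E}[u_{n1}^4]/({\rm E}[u_{n1}^2])^2$, which is $o(1)$ exactly by Condition~(iv). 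The delicate term is ${\rm E}[O^2]$: expanding it gives a quadruple sum over $i<j$ and $k<l$, and the crucial point is that a term survives only when $\{i,j\}=\{k,l\}$, i.e. $i=k,j=l$ --- any index occurring an odd number of times among the four $\Sb$-factors forces out a factor ${\rm E}[u_{nm}\Sb_{nm}]=\zerob$. The surviving diagonal requires ${\rm E}[(\Sb_{ni}\pr\Sb_{nj})^2]$, which by conditioning on $\Sb_{nj}$ and using ${\rm E}[\Sb_{ni}\Sb_{ni}\pr]=\frac{1}{p_n-1}({\bf I}_{p_n}-\thetab_0\thetab_0\pr)$ equals $\frac{1}{p_n-1}$. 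Hence ${\rm E}[O^2]=\frac{4({\rm E}[u_{n1}^2])^2}{p_n-1}\sum_{i<j}(n-j)^2$, and since $\sum_{i<j}(n-j)^2=O(n^4)$ the induced contribution to ${\rm Var}[S_n]$ is $O(1/p_n)$, vanishing by Condition~(ii). Combining the three pieces gives ${\rm Var}[S_n]\to0$, hence $S_n\to1$ in quadratic mean.
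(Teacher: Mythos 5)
Your proof is correct and takes essentially the same route as the paper's: the same closed form for $\sigma^2_{n\ell}$, the same interchange of summation giving mean $(n-1)/n$, and the same diagonal/off-diagonal variance split, with Condition (iv) controlling the $u_{ni}^2$-part and Condition (ii) the $O(1/p_n)$-part. The only difference is cosmetic --- you re-derive the facts ${\rm E}[\Sb_{ni}\pr\Sb_{nj}]=0$ and ${\rm E}[(\Sb_{ni}\pr\Sb_{nj})^2]=1/(p_n-1)$ (via the odd-index and conditioning arguments) where the paper cites Lemma A.1 of Paindaveine and Verdebout, and you make explicit the vanishing covariance between the two parts that the paper leaves implicit.
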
 

\begin{lemma}
\label{THElemma}
Under the assumptions of Theorem~\ref{maintheor}, 
$
\sum_{\ell=2}^n {\rm E}[Y_{n\ell}^2 \; {\mathbb I}[| Y_{n\ell}| > \varepsilon]]\to 0
$ 
as~$n\to \infty$ for any~$\varepsilon>0$.   
\end{lemma}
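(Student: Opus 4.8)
The plan is to verify the Lindeberg-type condition~\eqref{36.36} through a fourth-moment bound, thereby avoiding any direct handling of the indicator. Since $Y_{n\ell}^2\,\mathbb{I}[|Y_{n\ell}|>\varepsilon]\le \varepsilon^{-2}Y_{n\ell}^4$ pointwise, it is enough to prove that $\sum_{\ell=2}^n {\rm E}[Y_{n\ell}^4]\to 0$ as $n\to\infty$; the truncation then disappears and only a moment computation remains. Writing $\mathbf{T}_{n\ell}:=\sum_{i=1}^{\ell-1}u_{ni}\Sb_{ni}$ and $c_n:=\sqrt{2(p_n-1)}/(n{\rm E}[u_{n1}^2])$, we have $Y_{n\ell}=c_n\, u_{n\ell}\,\mathbf{T}_{n\ell}\pr\Sb_{n\ell}$.

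First I would condition on $\mathcal{F}_{n,\ell-1}$ and exploit the rotational-symmetry structure: under $\mathcal{R}(\thetab_0,F)$, the sign $\Sb_{n\ell}$ is uniformly distributed on the unit sphere of $\thetab_0^{\perp}\cong\R^{p_n-1}$, independent of $u_{n\ell}$ and of $\mathcal{F}_{n,\ell-1}$. Using the standard quartic sphere moment ${\rm E}[(\mathbf{a}\pr\Sb_{n\ell})^4]=3\|\mathbf{a}\|^4/\big((p_n-1)(p_n+1)\big)$ for a fixed $\mathbf{a}\in\thetab_0^{\perp}$, and the independence of $u_{n\ell}$ from $\Sb_{n\ell}$, I obtain
\[
{\rm E}[Y_{n\ell}^4\mid \mathcal{F}_{n,\ell-1}]
=
c_n^4\,{\rm E}[u_{n1}^4]\,\frac{3\,\|\mathbf{T}_{n\ell}\|^4}{(p_n-1)(p_n+1)},
\]
so the problem reduces to evaluating ${\rm E}[\|\mathbf{T}_{n\ell}\|^4]$.

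The next step computes ${\rm E}[\|\mathbf{T}_{n\ell}\|^4]$ by expanding $\|\mathbf{T}_{n\ell}\|^4=\big(\sum_{i}u_{ni}^2+\sum_{i\ne j}u_{ni}u_{nj}\Sb_{ni}\pr\Sb_{nj}\big)^2$. Since the vectors $u_{ni}\Sb_{ni}$ are i.i.d.\ and mean zero (again by uniformity of $\Sb_{ni}$), the cross term vanishes in expectation and only diagonal pairings survive in the quadratic part; with $m=\ell-1$ and ${\rm E}[(\Sb_{ni}\pr\Sb_{nj})^2]=1/(p_n-1)$ for $i\ne j$, this yields
\[
{\rm E}[\|\mathbf{T}_{n\ell}\|^4]
=
m\,{\rm Var}[u_{n1}^2]+m^2({\rm E}[u_{n1}^2])^2
+\frac{2m(m-1)({\rm E}[u_{n1}^2])^2}{p_n-1}.
\]
Summing over $\ell$ (equivalently $m=1,\dots,n-1$) and inserting $c_n^4=4(p_n-1)^2/\big(n^4({\rm E}[u_{n1}^2])^4\big)$ leaves three contributions to control.

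The main obstacle, and the step deserving the most care, is precisely this final bookkeeping: one must track how the explosion $c_n^4\sim p_n^2$ is exactly compensated by the $1/p_n^2$ decay of the quartic sphere moment and by the $n^{-4}$ normalization. Using $\sum_m m\sim n^2/2$ and $\sum_m m^2\sim n^3/3$, the leading $m^2$-term collapses, after the $(p_n-1)(p_n+1)$ factors cancel, to $O\!\big(n^{-1}{\rm E}[u_{n1}^4]/({\rm E}[u_{n1}^2])^2\big)$; the $m$-term is $O\!\big(n^{-2}({\rm E}[u_{n1}^4]/({\rm E}[u_{n1}^2])^2)^2\big)$; and the last term is $O\!\big((np_n)^{-1}{\rm E}[u_{n1}^4]/({\rm E}[u_{n1}^2])^2\big)$. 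Each of these is $o(1)$ by Condition~(iv) (together with $p_n\to\infty$ for the last), since (iv) states ${\rm E}[u_{n1}^4]/({\rm E}[u_{n1}^2])^2=o(n)$. This gives $\sum_{\ell=2}^n{\rm E}[Y_{n\ell}^4]\to 0$ and hence~\eqref{36.36}.
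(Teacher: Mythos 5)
Your proof is correct, and it rests on the same core idea as the paper's---verifying the Lindeberg condition~\eqref{36.36} through fourth moments of the martingale differences $Y_{n\ell}$---but the packaging differs at both steps, in instructive ways. To remove the indicator, you use the elementary Lyapunov bound $Y_{n\ell}^2\,\mathbb{I}[|Y_{n\ell}|>\varepsilon]\le \varepsilon^{-2}Y_{n\ell}^4$, so you must show $\sum_{\ell=2}^n{\rm E}[Y_{n\ell}^4]\to 0$; the paper instead applies Cauchy--Schwarz and then Chebyshev, reducing matters to $\varepsilon^{-1}\sum_{\ell=2}^n\sqrt{{\rm E}[Y_{n\ell}^4]}\sqrt{{\rm Var}[Y_{n\ell}]}$ with ${\rm Var}[Y_{n\ell}]\le{\rm E}[Y_{n\ell}^2]=2(\ell-1)/n^2$, which is in principle a weaker demand per term; here, however, both reductions turn out to require exactly Condition~(iv), so your simpler route loses nothing. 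For the fourth moment itself, you condition on $\mathcal{F}_{n,\ell-1}$ and invoke the quartic sphere moment ${\rm E}[(\mathbf{a}'\Sb_{n\ell})^4]=3\|\mathbf{a}\|^4/\big((p_n-1)(p_n+1)\big)$ together with an exact evaluation of ${\rm E}\|\mathbf{T}_{n\ell}\|^4$, whereas the paper expands ${\rm E}\big[\big(\sum_{i=1}^{\ell-1}u_{ni}u_{n\ell}\rho_{n,i\ell}\big)^4\big]$ directly using ${\rm E}[\rho_{n,1\ell}^4]=3/(p_n^2-1)$ and ${\rm E}[\rho_{n,1\ell}^2\rho_{n,2\ell}^2]=1/(p_n-1)^2$. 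These are the same computation in different coordinates: your coefficient $\frac{1}{(p_n-1)(p_n+1)}\big(1+\frac{2}{p_n-1}\big)$ on the $m(m-1)$ term equals $1/(p_n-1)^2$, matching the paper's expansion exactly, and your final bookkeeping---contributions of order $n^{-1}\,{\rm E}[u_{n1}^4]/({\rm E}[u_{n1}^2])^2$, $n^{-2}\big({\rm E}[u_{n1}^4]/({\rm E}[u_{n1}^2])^2\big)^2$, and $(np_n)^{-1}\,{\rm E}[u_{n1}^4]/({\rm E}[u_{n1}^2])^2$---is each $o(1)$ under~(iv) alone (for the last term, $p_n\ge 1$ already suffices, so even $p_n\to\infty$ is not needed there). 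In short: a valid, slightly more elementary variant of the paper's argument, arriving at identical moment formulas.
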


These lemmas allow to use Theorem~\ref{Bil} to prove Proposition~\ref{prop2} which, jointly with Proposition~\ref{largenum}, establishes Theorem~\ref{maintheor}. Clearly, the resulting high-dimensional Watson test, $\tilde{\phi}^W_n$, say, rejects the null hypothesis $\mathcal{H}_0:\thetab=\thetab_0$ in favor of $\mathcal{H}_1:\thetab\neq\thetab_0$ at asymptotic level $\alpha$ whenever 
$$
\tilde{W}_n > \Phi^{-1}(1-\alpha),
$$
where $\Phi$ denotes the cumulative distribution function of the standard normal distribution. As already pointed out when commenting the assumptions of Theorem~\ref{maintheor}, this test achieves asymptotic null size~$\alpha$ irrespective of the way~$p_n$ goes to infinity with~$n$.

For the problem considered above, \cite{PaiVer2013b} introduced the high-dimensional \emph{sign} statistic 
\begin{equation}
\label{signteststat}
\tilde{S}_n
:=
\frac{\sqrt{2(p_n-1)}}{n}
\sum_{1\leq i < j\leq n} \Sb_{ni}\pr\Sb_{nj} 
\end{equation}
and showed that the $(n,p)$-universal asymptotic null distribution of~$\tilde{S}_n$ is standard normal. In the next result, we identify assumptions on the sequence~$u_{n1}$ under which $\tilde{W}_n$ and~$\tilde{S}_n$ are ($(n,p)$-universally) asymptotically equivalent in probability under the null. 
\vspace{2mm}

\begin{theorem}
\label{compare}
Let the assumptions of Theorem~\ref{maintheor} hold and further assume that (v) ${\rm E}[u_{n1}^2]/({\rm E}[u_{n1}])^2 \to 1$ as~$\ny$. Then,
$
\tilde{W}_n-\tilde{S}_n=o_{\rm P}(1)
$
as~$\ny$.
\end{theorem}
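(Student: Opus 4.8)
The plan is to decompose $\tilde{W}_n-\tilde{S}_n=(\tilde{W}_n-R_n)+(R_n-\tilde{S}_n)$ and to control each summand separately, showing both are $o_{\rm P}(1)$.

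First I would dispose of $\tilde{W}_n-R_n$. From the ratio form in~(\ref{Watsonbis}), $\tilde{W}_n=R_n/D_n$ with $D_n:=(\frac{1}{n}\sum_{i=1}^n u_{ni}^2)/{\rm E}[u_{n1}^2]$. Proposition~\ref{largenum} yields $D_n\to 1$ in quadratic mean, hence in probability, while Proposition~\ref{prop2} gives $R_n=O_{\rm P}(1)$. Writing $\tilde{W}_n-R_n=R_n(1-D_n)/D_n$ and invoking Slutsky's lemma then shows $\tilde{W}_n-R_n=o_{\rm P}(1)$. It therefore suffices to prove $R_n-\tilde{S}_n=o_{\rm P}(1)$, which I would do via an $L^2$ argument.

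The key computation is the second moment of
$$
R_n-\tilde{S}_n
=
\frac{\sqrt{2(p_n-1)}}{n}
\sum_{1\leq i<j\leq n}
\Big(\frac{u_{ni}u_{nj}}{{\rm E}[u_{n1}^2]}-1\Big)
\Sb_{ni}\pr\Sb_{nj}
.
$$
Under rotational symmetry about $\thetab_0$, the signs $\Sb_{ni}$ are independent of the $u_{ni}$ and uniformly distributed on the unit sphere of the space orthogonal to $\thetab_0$, so that ${\rm E}[\Sb_{ni}]=\zerob$ and ${\rm E}[(\Sb_{ni}\pr\Sb_{nj})^2]=1/(p_n-1)$ for $i\neq j$. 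Expanding the square, the zero-mean property of the $\Sb_{ni}$ forces every cross term indexed by distinct pairs $\{i,j\}\neq\{k,l\}$ to vanish---whether the two pairs are disjoint or share exactly one index---leaving only the diagonal contribution. Using the independence of the $u_{ni}$ to evaluate ${\rm E}[(u_{ni}u_{nj}/{\rm E}[u_{n1}^2]-1)^2]=2(1-({\rm E}[u_{n1}])^2/{\rm E}[u_{n1}^2])$, the whole expression collapses to
$$
{\rm E}\big[(R_n-\tilde{S}_n)^2\big]
=
\frac{n-1}{n}\,
2\Big(1-\frac{({\rm E}[u_{n1}])^2}{{\rm E}[u_{n1}^2]}\Big)
,
$$
which tends to $0$ by assumption~(v). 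Hence $R_n-\tilde{S}_n\to 0$ in quadratic mean, a fortiori in probability, and combining with the first step gives $\tilde{W}_n-\tilde{S}_n=o_{\rm P}(1)$.

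The only genuinely delicate point---mild here---is the vanishing of the cross terms, which rests entirely on the structural facts that the $u_{ni}$ are independent of the $\Sb_{ni}$ and that ${\rm E}[\Sb_{ni}]=\zerob$; once these are granted the remainder is a routine moment evaluation, considerably simpler than the martingale argument underlying Proposition~\ref{prop2}.
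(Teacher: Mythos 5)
Your proof is correct and follows essentially the same route as the paper: your two pieces $\tilde{W}_n-R_n$ and $R_n-\tilde{S}_n$ are exactly the paper's terms $A_n$ and $B_n$, handled identically (Propositions~\ref{largenum} and~\ref{prop2} with Slutsky for the first, an $L^2$ computation yielding $\frac{2(n-1)}{n}\bigl(1-({\rm E}[u_{n1}])^2/{\rm E}[u_{n1}^2]\bigr)$ for the second). Your explicit justification that cross terms vanish via ${\rm E}[\Sb_{ni}]=\zerob$ and sign--$u$ independence is a point the paper leaves implicit (deferring to the pairwise independence facts recalled in its Appendix), but the argument is the same.
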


\begin{proof}[of Theorem~\ref{compare}]
Decompose $\tilde{W}_n-\tilde{S}_n$ into $A_n+B_n$, with
$$
A_n
=
\bigg(
\frac{{\rm E}[u_{n1}^2]}{\frac{1}{n}\sum_{i=1}^n u_{ni}^2}-1
\bigg)
\,
\frac{\sqrt{2(p_n-1)}}{n{\rm E}[u_{n1}^2]}
\sum_{1\leq i < j\leq n} u_{ni} u_{nj} \Sb_{ni}\pr\Sb_{nj} 
$$
and
$$
B_n
=
\frac{\sqrt{2(p_n-1)}}{n}
\sum_{1\leq i < j\leq n} \bigg(\frac{u_{ni} u_{nj}}{{\rm E}[u_{n1}^2]}-1\bigg) \Sb_{ni}\pr\Sb_{nj} 
.
$$
Propositions~\ref{largenum} and~\ref{prop2} readily entail that~$A_n=o_{\rm P}(1)
$
as~$\ny$. As for~$B_n$, we have (see the beginning of the Appendix for a recall on some results regarding expectations of the signs $\Sb_{ni}$)
\begin{eqnarray*}
\lefteqn{
{\rm E}[B_n^2]
=
\frac{2(p_n-1)}{n^2}
\!\!
\sum_{1\leq i < j\leq n} 
\!
{\rm E}
\Bigg[
\bigg(\frac{u_{ni} u_{nj}}{{\rm E}[u_{n1}^2]}-1\bigg)^2 \! (\Sb_{ni}\pr\Sb_{nj})^2 
\Bigg]
=
\frac{2}{n^2}
\!
\sum_{1\leq i < j\leq n} 
\!
{\rm E}
\Bigg[
\bigg(\frac{u_{ni} u_{nj}}{{\rm E}[u_{n1}^2]}-1\bigg)^2  
\Bigg]
}
\\[3mm]
& &
=
\frac{n-1}{n}
\,{\rm E}
\Bigg[
\bigg(\frac{u_{n1} u_{n2}}{{\rm E}[u_{n1}^2]}-1\bigg)^2  
\Bigg]
=
\frac{2(n-1)}{n}
\,{\rm E}
\Bigg[
 1-  \frac{u_{n1} u_{n2}}{{\rm E}[u_{n1}^2]}
\Bigg]
=
\frac{2(n-1)}{n}
\Bigg(
 1-  \frac{({\rm E}[u_{n1}])^2}{{\rm E}[u_{n1}^2]}
\Bigg),
\end{eqnarray*}
which, in view of Condition~(v), is $o(1)$ as~$\ny$. The result follows. 
\end{proof}

This result shows that, quite intuitively, if $u_{n1}$ becomes constant asymptotically (in the sense that 
${\rm Var}[u_{n1}]/({\rm E}[u_{n1}])^2 \to 0$), then the high-dimensional Watson test~$\tilde{\phi}^W_n$ coincides with the sign test based on~(\ref{signteststat}). This should be considered as the exception rather than the rule, though, since there is no particular reason why the distribution of~$\Xb_{n1}$ should concentrate in (a possibly translated version of) the orthogonal complement of~$\thetab_0$.

\section{Spiked covariance matrices} \label{spike}

Let $\Yb_{n1}, \ldots, \Yb_{nn}$ be a random sample from the $p_n$-dimensional multinormal distribution with mean zero and covariance matrix~$\Sigb$. For fixed~$\thetab_0\in{\cal S}^{p_n-1}$, we consider here the problem of testing the null hypothesis that~$\Sigb$ has a  ``$\thetab_0$-spiked" structure, that is, is of the form 
$$
{\cal H}_0^{\rm spi}: \Sigb= \sigma^2({\bf I}_{p_n}+ \lambda \thetab_0 \thetab_0\pr),
\
\textrm{ for some } \sigma^2,\lambda>0.
$$

Consider  the projections~$\Xb_{ni}:=\Yb_{ni}/\|\Yb_{ni}\|$, $i=1,\ldots,n$, of the observations on the unit hypersphere, and let
$$
\Sb_{ni}:=\frac{\Xb_{ni}-(\Xb_{ni}'\thetab_0)\thetab_0}{\|\Xb_{ni}-(\Xb_{ni}'\thetab_0)\thetab_0\|}.
$$
Under~${\cal H}_0^{\rm spi}$, (i) the $\Sb_{ni}$'s are mutually independent and are uniformly distributed over~${\mathcal S}^{p_n-1}(\thetab_0^\perp):=\{ \xb \in\mathcal{S}^{p_n-1}\,|\, \xb\pr\thetab_0=0 \}$; moreover, (ii) the $\Xb_{ni}'\thetab_0$'s are independent and identically distributed, and they are independent of the $\Sb_{ni}$'s. It is well-known that (i)-(ii) imply that the common distribution of the projected observations~$\Xb_{ni}$ is rotationally symmetric about~$\thetab_0$. Consequently, a high-dimensional test for $\thetab_0$-spikedness is the test, $\tilde{\phi}^{\rm spi}_n$ say, that rejects the null~${\cal H}_0^{\rm spi}$, at asymptotic level~$\alpha$, whenever 
$$
\tilde{W}^{\rm spi}_n(\Yb_{n1},\ldots,\Yb_{nn})
:=
\tilde{W}_n(\Xb_{n1},\ldots,\Xb_{nn})
>
\Phi^{-1}(1-\alpha)
.
$$ 

Theorem~\ref{maintheor} ensures that~$\tilde{\phi}^{\rm spi}_n$ has asymptotic null size~$\alpha$ as soon as~$p_n$ goes to infinity with~$n$ (universal $(n,p)$ asymptotics), which is illustrated in the simulations of the next section. Typically, this test will show large powers against $\thetab$-spiked alternatives, with $\thetab \neq \thetab_0$. 
\vspace{-5mm}

\section{Monte Carlo study}\label{simus}

In this section, we conduct a Monte Carlo simulation study to check the validity of our universal asymptotic results related to both~$\tilde{W}_n$ and~$\tilde{W}^{\rm spi}_n$. To do so, we generated, for every~$(n,p)\in C\times C$, with $C=\{5, 30, 200, 1,\!000\}$, and for  $\thetab_0$  the first vector of the canonical basis of~$\R^p$, $M=2,\!500$ independent random samples from each of the following $p$-dimensional distributions~:
\begin{itemize}
\item[(i)] 
the FvML distribution~$\mathcal{R}(\thetab_0,F_{p,2})$ (see Section~\ref{prelim});
\item[(ii)]  the Purkayastha distribution~$\mathcal{R}(\thetab_0,G_{p,1})$, associated with~$G_{p,\kappa}(t)=d_{p,\kappa}\int_{-1}^t (1-s^2)^{(p-3)/2} \exp(-\kappa\arccos(s)) \,ds$ ($t\in[-1,1]$), where~$d_{p,\kappa}$ is a normalizing constant; 
\item[(iii)] 
 the multinormal distribution with mean zero and covariance matrix $\Sigb={\bf I}_p+ (1/2) \thetab_0 \thetab_0\pr$.
\end{itemize}
The modified Watson statistic~$\tilde{W}_n$ was evaluated on the samples from~(i)-(ii) (rotational symmetry about~$\thetab_0$), while the statistic~$\tilde{W}_n^{\rm spi}$ was computed for each sample from~(iii) ($\thetab_0$-spikedness). For each~$(n,p)$ regime considered, we report the corresponding histograms of~$\tilde{W}_n$ and~$\tilde{W}_n^{\rm spi}$ in Figures~\ref{fig1}-\ref{fig2} and in Figure~\ref{fig3}, respectively (each histogram is based on $M=2,500$ values of these statistics).

From Theorem~\ref{maintheor} and the discussion in Section~\ref{spike}, histograms are expected to be approximately standard normal as soon as~$\min(n,p)$ is large, in a universal way (that is, irrespective of the relative size of~$n$ and~$p$). Inspection of the results shows that, for all three setups, the  standard normal approximation is valid for moderate to large values of~$n$ and~$p$, irrespective of the value of~$p/n$, which confirms our universal asymptotic results. Note also that, for small $p$ and moderate to large~$n$ (that is, $p=5$ and $n\geq 30$), histograms are approximately (standardized) chi-square, which is consistent with classical fixed-$p$ asymptotic results; see Section~\ref{prelim}.

\section*{Acknowledgement}

Christophe Ley thanks the Fonds National de la Recherche Scientifique, Communaut\'e Fran\c caise de Belgique, for support via a Mandat de Charg\'e de Recherche. Davy Paindaveine's research is supported by an \mbox{A.R.C.} contract from the Communaut\'e Fran\c{c}aise de Belgique and by the IAP research network grant \mbox{nr.} P7/06 of the Belgian government (Belgian Science Policy).

\appendix

\section*{Appendix: proofs of Lemmas~\ref{firstlem} and~\ref{THElemma}}
\vspace{3mm}

We recall that, under the assumptions of Theorem~\ref{maintheor}, the signs $\Sb_{ni}$ are uniformly distributed over~${\mathcal S}^{p_n-1}(\thetab_0^\perp)$ (see Section~\ref{spike}) and that the $u_{ni}$'s are independent of the $\Sb_{ni}$'s, $i=1,\ldots,n$. From Lemma~A.1 in \cite{PaiVer2013b} it directly follows that, for fixed~$n$, the quantities $\rho_{n,ij}:=\Sb_{ni}'\Sb_{nj}$ are pairwise independent and satisfy~${\rm E}[\rho_{n,ij}]=0$, ${\rm E}[\rho_{n,ij}^2]=1/(p_n-1)$, and ${\rm E}[\rho_{n,ij}^4]=3/(p_n^2-1)$. 
 \vspace{2mm}

%

\begin{proof}[of Lemma~\ref{firstlem}]
Rotational symmetry about~$\thetab_0$ readily yields ${\rm E}[\Sb_{n\ell}\Sb\pr_{n\ell}]=\frac{1}{p_n-1} ({\bf I}_{p_n} - \thetab_0 \thetab_0\pr)$. The independence between the $u_{ni}$'s and $\Sb_{ni}$'s then provides
$$
\sigma^2_{n\ell}={\rm E}_{n,\ell-1}[Y_{n\ell}^2] 
=
\frac{2(p_n-1)}{n^2 ({\rm E}[u_{n1}^2])^2}  \sum_{i,j=1}^{\ell-1} u_{ni} u_{nj} {\rm E}[u_{n\ell}^2] \Sb_{ni}\pr {\rm E}[\Sb_{n\ell}\Sb_{n\ell}\pr]
\Sb_{nj} 
=
\frac{2}{n^2 {\rm E}[u_{n1}^2]}  \sum_{i,j=1}^{\ell-1} u_{ni} u_{nj} \rho_{n,ij}. 
$$
Hence we obtain
\begin{equation}
\label{exps}
{\rm E}\Bigg[\sum_{\ell=2}^n \sigma^2_{n\ell}\Bigg] 
=
\frac{2}{n^2 {\rm E}[u_{n1}^2]} \sum_{\ell=2}^n  \sum_{i,j=1}^{\ell-1} {\rm E} [u_{ni}u_{nj}]  {\rm E} [\rho_{n,ij}] 
=
\frac{2}{n^2} \sum_{\ell=2}^n\,  (\ell-1)
=\frac{n-1}{n}
.
\end{equation}
Moreover, the pairwise independence of the~$\rho_{n,ij}$'s entails 
\begin{equation*}
{\rm Var}\Bigg[\sum_{\ell=2}^n \sigma^2_{n\ell}\Bigg] 
=
\frac{4}{n^4 ({\rm E}[u_{n1}^2])^2} 
{\rm Var}\Bigg[\sum_{\ell=2}^n  \sum_{i,j=1}^{\ell-1} u_{ni} u_{nj}  \rho_{n,ij} \Bigg]
=
\frac{4}{n^4 ({\rm E}[u_{n1}^2])^2}\left\{ T_1\n +  4\, T_2\n\right\}
,
\label{here1}
\end{equation*}
with
$$
T_1\n
:=
{\rm Var}\Bigg[\sum_{\ell=2}^n  \sum_{i=1}^{\ell-1} u_{ni}^2 \Bigg]
=
{\rm Var}\Bigg[\sum_{i=1}^{n-1} \, (n-i) u_{ni}^2 \Bigg] 
=
\sum_{i=1}^{n-1} \, (n-i)^2 \,
 {\rm Var}[u_{n1}^2]
\leq 
n^3
\,
 {\rm Var}[u_{n1}^2]
$$
and
\begin{eqnarray*}
T_2\n
&:=&
{\rm Var}\Bigg[\sum_{\ell=2}^n  \sum_{1\leq i<j\leq \ell-1} u_{ni} u_{nj} \rho_{n,ij}\Bigg]
=
 {\rm Var}\Bigg[  \sum_{1\leq i<j\leq n-1}(n-j) u_{ni} u_{nj} \rho_{n,ij} \Bigg]
\\[3mm]
& = &
\sum_{1\leq i<j\leq n-1} (n-j)^2 {\rm Var}[u_{ni} u_{nj}\rho_{n,ij}]
=
\sum_{1\leq i<j\leq n-1} (n-j)^2 {\rm E}[u_{ni}^2 u^2_{nj}\rho^2_{n,ij}]
\\[3mm]
& = &
\frac{({\rm E}[u_{n1}^2])^2}{p_n-1}
\sum_{1\leq i<j\leq n-1} (n-j)^2 
 \leq 
\frac{n^4 ({\rm E}[u_{n1}^2])^2}{p_n-1}
.
\end{eqnarray*}
Hence,
\begin{equation}
\label{vars}
{\rm Var}\Bigg[\sum_{\ell=2}^n \sigma^2_{n\ell}\Bigg] 
\leq
\frac{ 4{\rm Var}[u_{n1}^2]}{ n({\rm E}[u_{n1}^2])^2} +\frac{16}{p_n-1}
\leq 
\frac{ 4{\rm E}[u_{n1}^4]}{ n({\rm E}[u_{n1}^2])^2} +\frac{16}{p_n-1}
\to 0,
\end{equation}
in view of Conditions~(ii) and (iv) from Theorem~\ref{maintheor}. 
Using~(\ref{exps}) and~(\ref{vars}) in 
$$
{\rm E}\Bigg[\Bigg(\sum_{\ell=2}^n \sigma^2_{n\ell}-1\Bigg)^2\Bigg]
=
{\rm Var}\Bigg[\sum_{\ell=2}^n \sigma^2_{n\ell}\Bigg] 
+
\Bigg( {\rm E}\Bigg[\sum_{\ell=2}^n \sigma^2_{n\ell}-1\Bigg] \Bigg)^2
$$
then establishes the result.
\end{proof}
\vspace{2mm}


\begin{proof}[of Lemma~\ref{THElemma}]
Applying first the Cauchy-Schwarz inequality, then the Chebyshev inequality, yields
$$
\sum_{\ell=2}^n {\rm E}[Y_{n\ell}^2 \; {\mathbb I}[| Y_{n\ell}| > \varepsilon]]
\leq 
 \sum_{\ell=2}^n 
\sqrt{{\rm E}[Y_{n\ell}^4]} 
\,
\sqrt{{\rm P}[|Y_{n\ell}| > \varepsilon]} 
\leq 
\frac{1}{\varepsilon}
 \sum_{\ell=2}^n 
\sqrt{{\rm E}[Y_{n\ell}^4]} 
\,
\sqrt{{\rm Var}[Y_{n\ell}]} 
.
$$
Noting that
$
{\rm Var}[Y_{n\ell}]
\leq 
{\rm E}[Y^2_{n\ell}]
=2(\ell-1)/n^2
$, we obtain
\begin{equation}
\sum_{\ell=2}^n {\rm E}[Y_{n\ell}^2 \; {\mathbb I}[| Y_{n\ell}| > \varepsilon]]
\leq 
\frac{\sqrt{2}}{\varepsilon n}
 \sum_{\ell=2}^n 
\sqrt{\ell\,
{\rm E}[Y_{n\ell}^4]} 
.
\label{main}
\end{equation}
Using the fact that $0\leq u_{ni} \leq1$ almost surely and the independence between the $u_{ni}$'s and the $\Sb_{ni}$'s, we get
\begin{eqnarray*}
\lefteqn{
\hspace{-9mm}
{\rm E}\Bigg[ 
\bigg( \sum_{i=1}^{\ell-1} u_{ni} u_{n\ell} \rho_{n,i\ell} \bigg)^4 
\Bigg] 
=\
\sum_{i,j,r,s=1}^{\ell-1}
{\rm E} \big[u_{n\ell}^4 u_{ni} u_{nj} u_{nr} u_{ns} \rho_{n,i\ell} \rho_{n,j\ell}\rho_{n,r\ell}\rho_{n,s\ell}\big] 
}
\\[4mm]
& &
\hspace{3mm}
=\
(\ell-1)({\rm E}[u_{n1}^4])^2
{\rm E} \big[ \rho_{n,1\ell}^4\big] 
+
3(\ell-1)(\ell-2){\rm E}[u_{n1}^4] ({\rm E}[u_{n1}^2])^2
{\rm E} \big[ \rho_{n,1\ell}^2 \rho_{n,2\ell}^2 \big] 
\nonumber
\\[4mm]
& &
\hspace{3mm}
=\
\frac{3(\ell-1)}{p^2_n-1}({\rm E}[u_{n1}^4])^2
+
\frac{3(\ell-1)(\ell-2)}{(p_n-1)^2}{\rm E}[u_{n1}^4] ({\rm E}[u_{n1}^2])^2
\\[2mm]
& &
\hspace{3mm}
\leq 
\frac{3}{(p_n-1)^2} 
\Big[
\ell ({\rm E}[u_{n1}^4])^2
+
\ell^2 
{\rm E}[u_{n1}^4] ({\rm E}[u_{n1}^2])^2
\Big]
\label{ok}
\end{eqnarray*}
which yields
\begin{eqnarray*}
{\rm E}\big[ 
Y_{n\ell}^4 
\big]
&\leq&
\frac{4(p_n-1)^2}{n^4({\rm E}[u_{n1}^2])^4}
\times
\frac{3}{(p_n-1)^2} 
\Big[
\ell ({\rm E}[u_{n1}^4])^2
+
\ell^2 
{\rm E}[u_{n1}^4] ({\rm E}[u_{n1}^2])^2
\Big]
\\[2mm]
&\leq&
\frac{12}{n^4}
\Bigg[
\ell 
\,
\frac{({\rm E}[u_{n1}^4])^2}{({\rm E}[u_{n1}^2])^4}
+
\ell^2 
\,
\frac{{\rm E}[u_{n1}^4]}{({\rm E}[u_{n1}^2])^2}
\Bigg]
.
\end{eqnarray*}
Plugging into~(\ref{main}), we conclude that
\begin{eqnarray*}
\sum_{\ell=2}^n {\rm E}[Y_{n\ell}^2 \; {\mathbb I}[| Y_{n\ell}| > \varepsilon]]
&\leq& 
\frac{\sqrt{24}}{\varepsilon n^3}
 \sum_{\ell=2}^n 
\sqrt{
\ell^2 
\,
\frac{({\rm E}[u_{n1}^4])^2}{({\rm E}[u_{n1}^2])^4}
+
\ell^3 
\,
\frac{{\rm E}[u_{n1}^4]}{({\rm E}[u_{n1}^2])^2}
} 
\\[2mm]
&\leq & 
\frac{\sqrt{24}}{\varepsilon n^3}
 \sum_{\ell=2}^n 
\bigg( 
\ell
\,
\frac{{\rm E}[u_{n1}^4]}{({\rm E}[u_{n1}^2])^2}
+
\ell^{3/2} 
\,
\sqrt{\frac{{\rm E}[u_{n1}^4]}{({\rm E}[u_{n1}^2])^2}}
\bigg) 
\\[2mm]
&\leq& 
O(n^{-1})
\,
\frac{{\rm E}[u_{n1}^4]}{({\rm E}[u_{n1}^2])^2}
+
O(n^{-1/2})
\,
\sqrt{\frac{{\rm E}[u_{n1}^4]}{({\rm E}[u_{n1}^2])^2}},
\end{eqnarray*}
which, in view of Condition~(iv) from Theorem~\ref{maintheor}, is indeed $o(1)$. 
\end{proof}
\vspace{4mm}


{
\bibliographystyle{biometrika}
\bibliography{HDMean_neutral}
}

\begin{figure}[htbp!]
\begin{center}
\includegraphics[height=15cm, width=\linewidth]{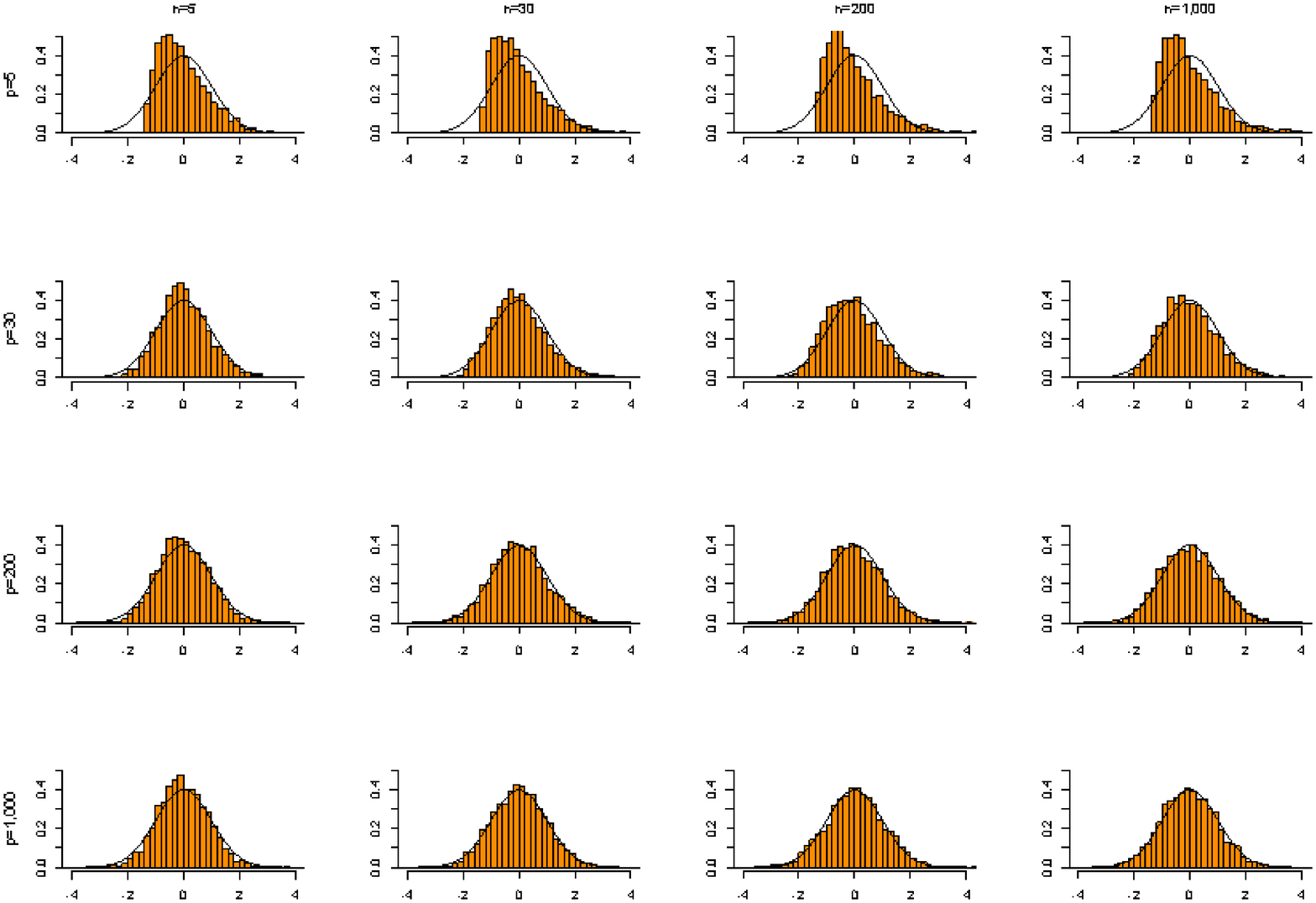}
\caption{Histograms, for various values of~$n$ and~$p$, of the modified Watson statistic~$\tilde{W}_n$ evaluated on $M=2,500$ random samples of size~$n$ from the $p$-dimensional FvML distribution with concentration~$\kappa=2$; see Section~\ref{simus} for details.}
\label{fig1}
\end{center}
\end{figure}

\begin{figure}[htbp!]
\begin{center}
\includegraphics[height=15cm, width=\linewidth]{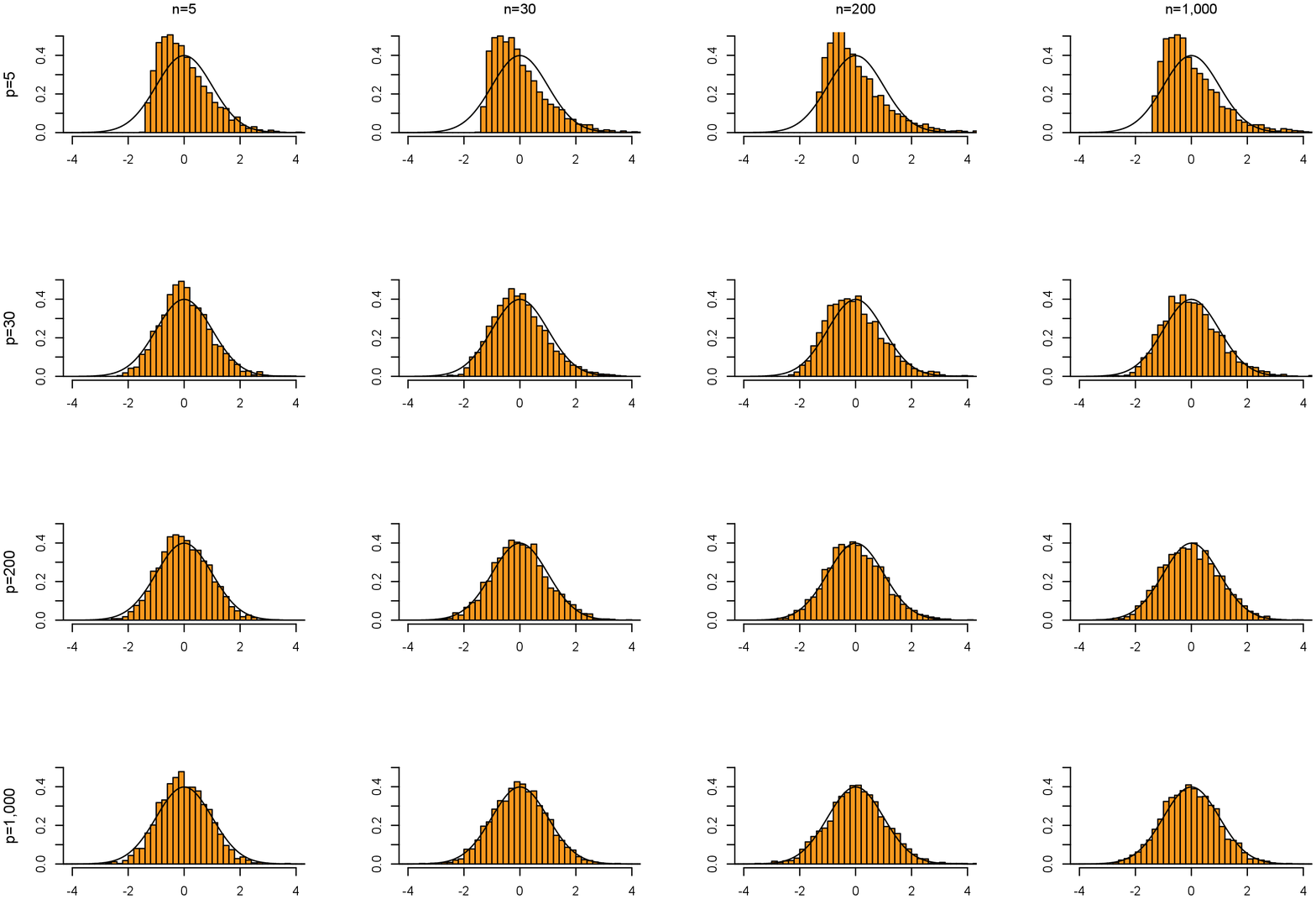}
\caption{Histograms, for various values of~$n$ and~$p$, of the modified Watson statistic~$\tilde{W}_n$ evaluated on $M=2,500$ random samples of size~$n$ from the $p$-dimensional Purkayastha distribution with concentration~$\kappa=1$; see Section~\ref{simus} for details.}
\label{fig2}
\end{center}
\end{figure}

\begin{figure}[htbp!]
\begin{center}
\includegraphics[height=15cm, width=\linewidth]{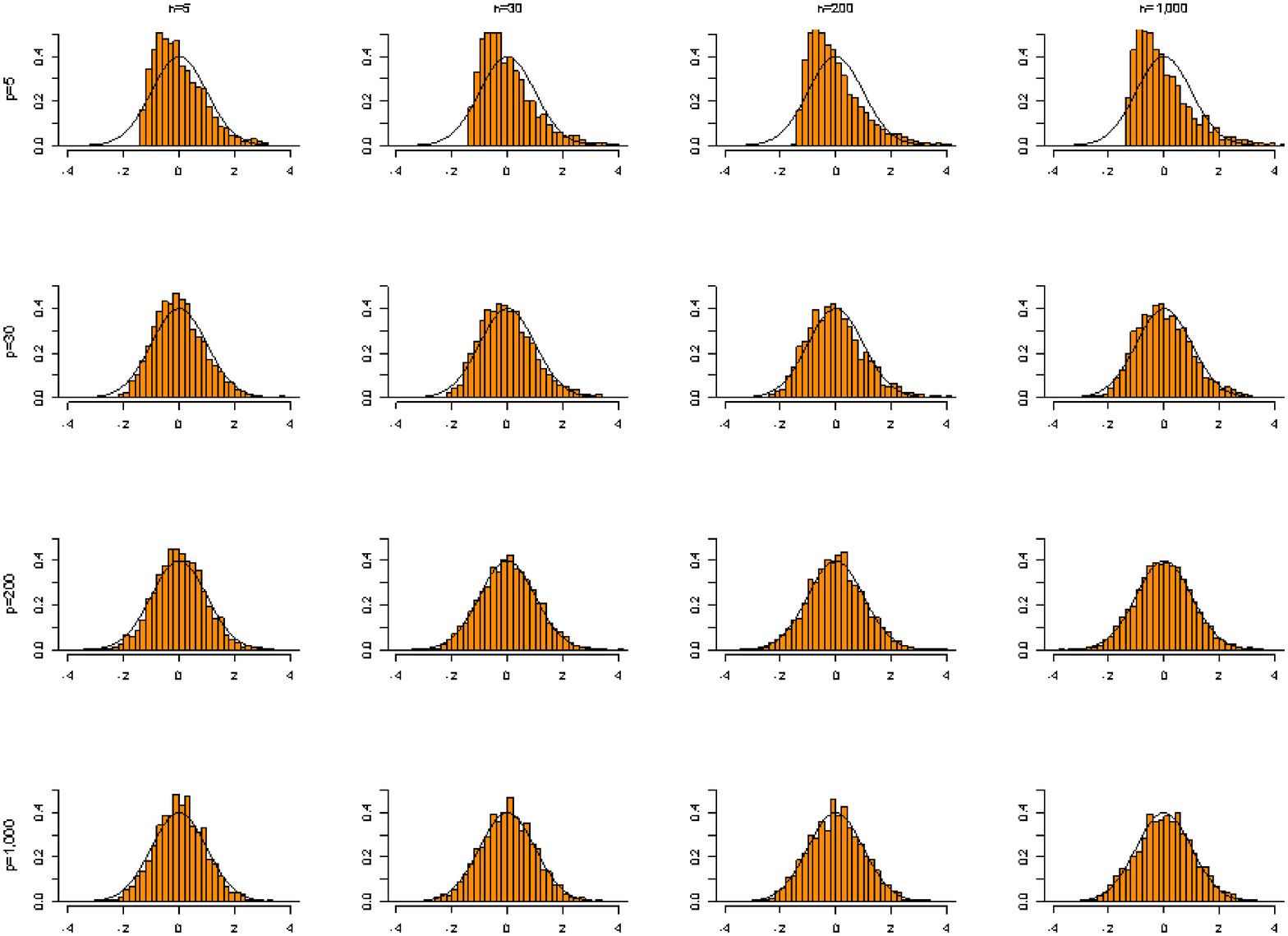}
\caption{\protect Histograms, for various values of~$n$ and~$p$, of the test statistic~$\tilde{W}^{\rm spi}_n$ for $\thetab_0$-spikedness evaluated on $M=2,500$ random samples of size~$n$ from the $p$-dimensional multinormal distribution with mean zero and covariance matrix $\Sigb={\bf I}_p+ (1/2) \thetab_0 \thetab_0\pr$; see Section~\ref{simus} for details.}
\label{fig3}
\end{center}
\end{figure}


\end{document}